\newcommand{\R}{\mathbb{R}}
\newcommand{\Z}{\mathbb{Z}}
\newcommand{\C}{\mathbb{C}}
\newcommand{\vtl}{\vartriangleleft}
\newcommand{\vtr}{\vartriangleright}
\newcommand{\Set}[1]{\{\,#1\,\}}
\title{Duality in Persistent Homology of Images}
\authorrunning{Ad\'{e}lie Garin, Teresa Heiss, Kelly Maggs, Bea Bleile, Vanessa Robins}
\author{Ad\'{e}lie Garin}{Laboratory for Topology and Neuroscience, EPFL, Lausanne, Switzerland}{adelie.garin@epfl.ch}{https://orcid.org/0000-0002-3223-6320}{SNSF, CRSII5 177237}
\author{Teresa Heiss}{IST Austria (Institute of Science and Technology Austria), Kloster\-neu\-burg, Austria}{teresa.heiss@ist.ac.at}{https://orcid.org/0000-0002-1780-2689}{ERC H2020, No. 788183}
\author{Kelly Maggs}{Mathematical Sciences Institute, The Australian National University, Canberra, Australia}{kelly.maggs@anu.edu.au}{}{}
\author{Bea Bleile}{School of Science and Technology, University of New England, Armidale, Australia}{bbleile@une.edu.au}{}{}
\author{Vanessa Robins}{Research School of Physics, The Australian National University, Canberra, Australia}{vanessa.robins@anu.edu.au}{https://orcid.org/0000-0001-7118-8491}{ARC Future Fellowship FT140100604} 
\keywords{Computational Topology, Topological Data Analysis, Persistent Homology, Duality, Digital Topology}
\begin{document}

\maketitle

\begin{abstract}
We derive the relationship between the persistent homology barcodes of two dual filtered CW complexes. Applied to greyscale digital images, we obtain an algorithm to convert barcodes between the two different (dual) topological models of pixel connectivity. 



\end{abstract}

\section{Introduction}
Persistent homology \cite{Edel08, comp_pers_hom} is a stable topological invariant of a filtration, i.e., a nested sequence of spaces $X_1 \subseteq X_2 \subseteq ... \subseteq X_n$ ordered by inclusion.
The output is a \emph{barcode} or \emph{diagram}, $Dgm_k$ (see Figure \ref{CC_V}d-e), a set consisting of pairs denoted by $\left[b,d \right)_k$ of birth and death indices of each $k$-th homology class (representing ``holes'' of dimension $k$).
A \emph{filtered complex} is a pair $(X,f)$ of a CW complex $X$ and a cell-wise constant function $f:X \rightarrow \R$ such that the sublevel sets of $f$ are subcomplexes. We call two $d$-dimensional filtered complexes $(X,f)$ and $(X^*,f^*)$ \emph{dual} if (i) each $k$-dimensional cell $\sigma \in X$ corresponds to a $(d-k)$-dimensional cell $\sigma^* \in X^*$, (ii) the adjacency relations $\sigma \leq \tau$ of $X$ are reversed $\tau^* \leq \sigma^*$ in $X^*$ and (iii) the filtration order is reversed $f^*(\sigma^*) = -f(\sigma)$.

This extended abstract summarises ongoing research that studies the relationship between the persistent homology of two dual filtered complexes. Our results can be seen as versions or extensions of Alexander duality \cite{munkres}. We simultaneously generalise existing results for simplicial or polyhedral complexes \cite{tripartition}, which were confined by a number of restrictions including to spheres (instead of general manifolds) \cite{delfinado1995incremental, Kerber_Alexander}, specific functions \cite{Kerber_Alexander}, or standard homology \cite{delfinado1995incremental}.
While our results are similar to those obtained in the study of extended persistence \cite{extending_pers}, our constructions and proofs differ significantly. We use a pair of dual complexes filtered by complementary functions, whereas \cite{extending_pers} uses a single simplicial complex filtered by sublevel and (relative) superlevel sets. Moreover, our results extend to the case of abstract chain complexes derived from discrete Morse theory \cite{Forman95adiscrete, Vidit2015DiscreteMT} and refine, for example, the dual V-paths and discrete Morse functions foreshadowed in \cite{Bauer_thesis}. Ultimately this enables us to adapt the image skeletonization and partitioning methods of \cite{delgado2015skeletonization} to a dual version. 
\begin{figure}[H] 
\begin{center}
\includegraphics[scale=0.4]{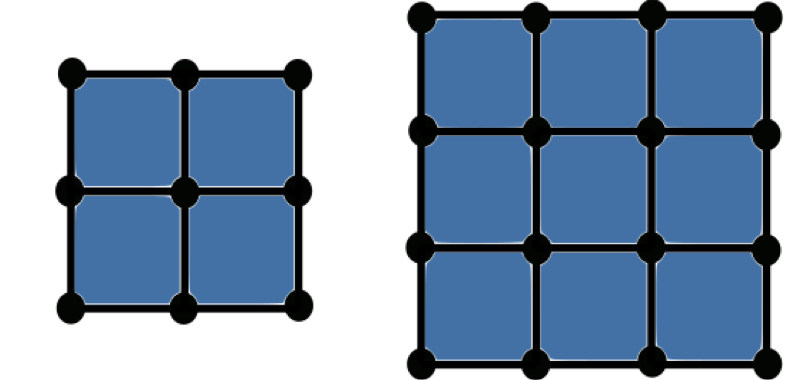}
\put(-220,30){$
\begin{pmatrix}
5 & 1 & 6 \\
2 & 9 & 4 \\
8 & 3 & 7 
\end{pmatrix}
$}
\put(-203,-10){\textbf{(a)}}
\put(-125,-10){\textbf{(b)}}
\put(-45,-10){\textbf{(c)}}
\put(-135,62){5}
\put(-135,40){2}
\put(-135,17){8}
\put(-113,62){1}
\put(-113,40){9}
\put(-113,17){3}
\put(-90,62){6}
\put(-90,40){4}
\put(-90,17){7}
\put(-62,56){5}
\put(-62,35){2}
\put(-62,12){8}
\put(-40,56){1}
\put(-40,35){9}
\put(-40,12){3}
\put(-18,56){6}
\put(-18,35){4}
\put(-18,12){7}

\hspace*{0.8cm}
\includegraphics[scale=0.8]{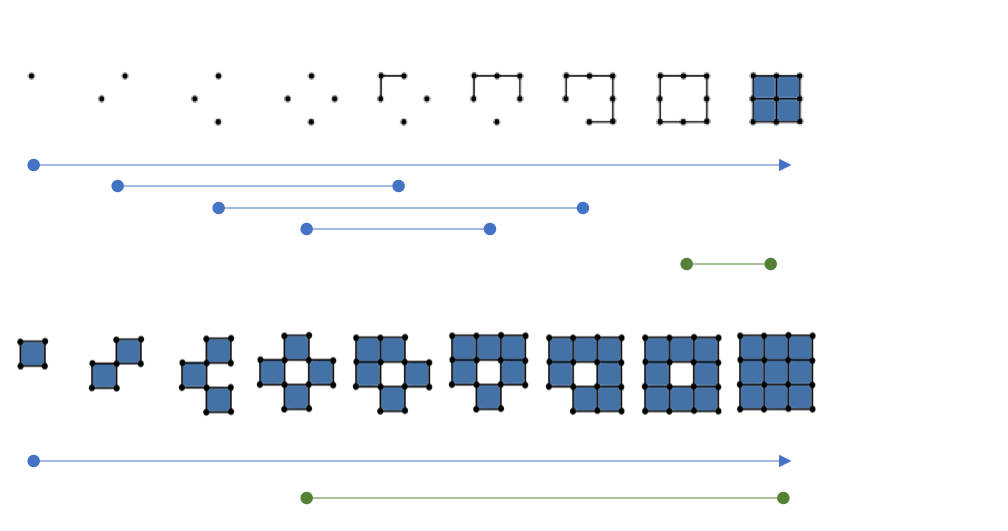}
\put(-379,80){1}
\put(-342,80){2}
\put(-308,80){3}
\put(-277,80){4}
\put(-241,80){5}
\put(-203,80){6}
\put(-165,80){7}
\put(-130,80){8}
\put(-95,80){9}
\put(-410,170){\textbf{(d)}}
\put(-379,185){1}
\put(-344,185){2}
\put(-308,185){3}
\put(-273,185){4}
\put(-238,185){5}
\put(-200,185){6}
\put(-165,185){7}
\put(-130,185){8}
\put(-95,185){9}
\put(-410,60){\textbf{(e)}}
\put(-82,138){$[1, \infty)_0$}
\put(-230,130){$[2,5)_0$}
\put(-155,122){$[3,7)_0$}
\put(-190,113){$[4,6)_0$}
\put(-88,100){$[8,9)_1$}
\put(-82,23){$[1, \infty)_0$}
\put(-82,10){$[4,9)_1$}
\caption{(a) The greyscale pixel values of an image represented as an array; (b) and (d) the V-constructed filtered cubical complex (where the pixels are vertices) and its barcode; (c) and (e) the T-constructed filtered cubical complex (where the pixels are the $2$-cells) and its barcode. $Dgm_0$ consists of the blue bars (representing connected components) and $Dgm_1$ of the green bars (loops).
} 
\label{CC_V}
\end{center}
\end{figure}

The first application of our results is to digital image analysis. Images are represented as rectangular arrays of numbers and their topological structure is best captured by a filtered cell complex. 
Here we focus on two cubical complexes that we refer to as the \emph{T-construction} and \emph{V-construction}, see Figure \ref{CC_V}.
The T-construction \cite{heiss2017streaming} treats pixels as \textit{top dimensional cells} (squares in 2D, cubes in 3D) while the V-construction \cite{Robins_DMT_images} considers pixels as \textit{vertices}.  In both cases, the function values from the original array are extended to all cells of the cubical complex to obtain the filtered complexes $I_T$ and $I_V$ respectively. Note that the T-construction corresponds to the indirect adjacency (or closed topology) of classical digital topology and the V-construction to the direct adjacency (or open topology). 
We present a relationship between the barcodes of $I_T$ and $I_V$ below. 
Previous work in \cite{adaptative} obtains similar results for digital images using extended persistent homology \cite{extending_pers}. That approach is different as it defines a single simplicial complex which is compatible with the piecewise linear foundations of extended persistence.
Our Theorem \ref{image_thm} shows how to compute the barcode of the T-construction using software designed for the V-construction and vice-versa. Thus, the most suitable software can be chosen independently of construction types.
Furthermore, computing higher-dimensional  persistent homology barcodes (e.g. $Dgm_2$ in $3$D images) may be optimised by using lower-dimensional ones of the complementary construction ($Dgm_0$).

\section{Results}

Let $(X,f)$ and $(X^*,f^*)$ be dual $d$-dimensional filtered CW complexes. Dual face relations lead to relationships at the \emph{filtered chain complex level}, and we show the existence of a shifted filtered chain isomorphism between the absolute filtered cochain complex of $(X,f)$ and the relative filtered chain complex of $(X^*,f^*)$. This induces a \emph{natural} isomorphism between the absolute persistent cohomology of $(X,f)$ and the relative persistent homology of $(X^*,f^*)$. Relying on the work of \cite{Vin}, we ultimately extend these results to a bijection between the absolute persistent homology barcodes of $(X,f)$ and $(X^*,f^*)$. 
\begin{theorem} \label{main_thm}
Let $(X,f)$ and $(X^*,f^*)$ be dual $d$-dimensional filtered complexes. There is a bijection between the absolute persistent homology barcode of $(X,f)$ and $(X^*,f^*)$, given by: 
\begin{align*}
&[p,q) \in Dgm_k(X,f)  \longleftrightarrow [-q,-p)\in Dgm_{d-k-1}(X^*,f^*) \\
&[p,\infty) \in Dgm_k(X,f) \longleftrightarrow  [-p,\infty) \in Dgm_{d-k}(X^*,f^*).
\end{align*}
\end{theorem}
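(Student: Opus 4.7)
The plan follows the roadmap sketched in the Results section and is essentially a persistent Lefschetz duality: first realize the duality at the filtered (co)chain-complex level, then descend to persistence modules, and finally use \cite{Vin} together with the long exact sequence of a pair to convert into a bijection of absolute persistent homology barcodes.

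The chain-level step is straightforward. The cell bijection $\sigma \mapsto \sigma^*$ extends linearly to an isomorphism $D\colon C^k(X) \to C_{d-k}(X^*)$. The reversal of face relations in condition~(ii), together with a sign-consistent choice of orientations, gives $[\sigma:\tau]_X = [\tau^*:\sigma^*]_{X^*}$, so $D$ intertwines the coboundary of $X$ with the boundary of $X^*$. Filtration compatibility is immediate from $f^*(\sigma^*)=-f(\sigma)$: the cells of $X_p = \{f \le p\}$ are exactly the duals of those of $X^* \setminus X^*_{<-p}$, where $X^*_{<t}=\{f^*<t\}$. Hence $D$ identifies $C^*(X_p)$ with $C_{*+d}(X^*,X^*_{<-p})$ naturally in $p$, inducing an isomorphism of persistence modules between the absolute persistent cohomology of $(X,f)$ in degree $k$ and the relative persistent homology of $(X^*,f^*)$ in degree $d-k$, under the parameter reversal $p \mapsto -p$.

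To recover the barcode bijection in the theorem, I would compose this with two further translations. On the $(X,f)$ side, \cite{Vin} converts the absolute persistent cohomology barcode into the absolute persistent homology barcode: finite bars are preserved, while essential bars are reflected through the filtration (accounting for the endpoint sign flips in the statement). On the $(X^*,f^*)$ side, the long exact sequence of the pair $(X^*,X^*_{<-p})$ is natural in $p$ and so becomes an exact sequence of persistence modules
\[
H_{d-k}(X^*_{<-p}) \to H_{d-k}(X^*) \to H_{d-k}(X^*,X^*_{<-p}) \to H_{d-k-1}(X^*_{<-p}) \to H_{d-k-1}(X^*).
\]
A standard interval-decomposition argument shows that essential bars of the relative module come from classes in $H_{d-k}(X^*)$ and correspond to essential bars of absolute $H_{d-k}(X^*,f^*)$, whereas finite bars of the relative module arise via the connecting homomorphism from finite bars of absolute $H_{d-k-1}(X^*,f^*)$. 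This is the source of the two different degree shifts $d-k$ versus $d-k-1$ in the theorem. The main obstacle will be the bookkeeping at this final step: verifying how each interval of the relative module decomposes through the long exact sequence, tracking how endpoints behave under $p \mapsto -p$ and the strict-versus-closed sublevel conventions, and confirming that the reflected essential bars coming from the \cite{Vin} step on the $(X,f)$ side align exactly with those of the dual module without further adjustment.
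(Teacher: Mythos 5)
Your plan is correct and follows essentially the same route as the paper: a filtered (co)chain-level isomorphism between the absolute cochain complex of $(X,f)$ and the relative chain complex of $(X^*,f^*)$ (with the degree shift $k \mapsto d-k$ and parameter reversal $p \mapsto -p$), composed with the absolute homology/cohomology correspondence and the relative-to-absolute correspondence from \cite{Vin}. The only difference is that your long-exact-sequence step on the dual side re-derives what the paper simply cites as Proposition~2.4 of \cite{Vin}, and since the paper works over $\Z_2$ the orientation/sign bookkeeping you mention is not needed.
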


\section{Application to Images}
Applying Theorem~\ref{main_thm} requires dual filtered complexes, but the T- and V-constructions for images are dual \emph{only within the interior} of the image domain. 
To accommodate this problem and obtain properly dual complexes, we glue a top-dimensional cell to the boundary of the T-construction. The dual complex is then the V-construction with a cone over its boundary. 




\begin{figure}[H]
    \centering
    \includegraphics[scale=0.5]{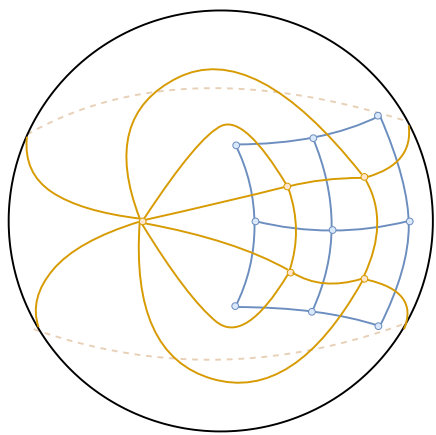}
    \caption{The blue grid is a cubical complex built from a $2$ by $2$ array using the T-construction. Gluing a $2$-cell to its boundary results in a sphere whose dual is drawn in orange. The orange complex is the V-construction with a cone over the boundary.}
    \label{dualsphere}
\end{figure}

To implement this construction we add a single layer of pixels with value $\infty$ around the boundary of the image array and take the one-point compactification of this padded domain. 
Let $I$ be the original image array and $I^{\infty}$ the padded image.
Then $Dgm_k(I_T) = Dgm_k(I^{\infty}_T)$ and $Dgm_k(I_V) = Dgm_k(I^{\infty}_V)$.
Also note that the one-point compactifications of $I^{\infty}_T$ and $(-I^{\infty})_V$ are dual filtered complexes.  With a few more accounting steps, we obtain 

\begin{theorem}\label{image_thm}
Let $I$ be a $d$-dimensional digital image. There are bijections between the barcodes of $I_T$ and $(-I^\infty)_V$ and the barcodes of $I_V$ and $(-I^\infty)_T$ given by:
\begin{align*}
[p,q) \in Dgm_k(I_V)& \longleftrightarrow [-q,-p) \in \widetilde{Dgm}_{d-k-1}((-I^\infty)_T) \\
 [p,q) \in Dgm_k(I_T) &\longleftrightarrow [-q,-p) \in \widetilde{Dgm}_{d-k-1}((-I^\infty)_V) 
\end{align*}
where $\widetilde{Dgm}$ denotes the reduced homology, that is, the $0$-dimensional bar $[-\infty,\infty)_0$ is removed. 
\end{theorem}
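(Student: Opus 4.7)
The plan is to reduce Theorem~\ref{image_thm} to Theorem~\ref{main_thm} by working with the one-point compactifications of $I^\infty_T$ and $(-I^\infty)_V$ introduced just before the statement. I would proceed in three stages.

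\textbf{Stage 1:} Verify the two assertions already made in the paragraph preceding the theorem. Padding invariance $Dgm_k(I_T)=Dgm_k(I^\infty_T)$ and $Dgm_k(I_V)=Dgm_k(I^\infty_V)$ holds because every cell created by the boundary padding carries filtration value $\pm\infty$, so it never enters any finite sublevel set and cannot be responsible for creating or destroying any essential class. The duality of the compactifications $\widehat{I^\infty_T}$ and $\widehat{(-I^\infty)_V}$ is a combinatorial check: standard cubical T-V duality already matches cells in the interior (Figure~\ref{dualsphere}); on the boundary the added top-dimensional cell on the T-side is paired with the cone point on the V-side, and the cone cells with boundary cells of the T-construction. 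The filtration identity $f^*(\sigma^*)=-f(\sigma)$ then follows automatically because the V-side is built from $-I^\infty$ and all compactification cells sit at $+\infty$ on the T-side, $-\infty$ on the V-side.

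\textbf{Stage 2:} Apply Theorem~\ref{main_thm} directly to this dual pair to obtain a bijection between their absolute persistent homology barcodes via the rules $[p,q)_k\leftrightarrow[-q,-p)_{d-k-1}$ and $[p,\infty)_k\leftrightarrow[-p,\infty)_{d-k}$.

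\textbf{Stage 3:} Translate back to the uncompactified diagrams. Since every compactification cell has filtration value $\pm\infty$, the compactified and uncompactified filtrations agree on every finite sublevel set, hence on every finite bar. The only possible differences lie in the essential bars. On the T-side, closing the contractible $d$-disk $I^\infty_T$ into a $d$-sphere $\widehat{I^\infty_T}$ adds exactly one new essential bar $[+\infty,\infty)_d$; on the V-side the cone adds exactly one new $[-\infty,\infty)_0$. Under the bijection of Stage~2 these two extra bars are paired with each other ($k=d$, $p=+\infty$ sent to $d-k=0$, $-p=-\infty$). Removing them on both sides --- which on the V-side is precisely the passage to reduced homology $\widetilde{Dgm}$ --- yields the bijection claimed in the theorem. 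Here essential bars $[p,\infty)_k$ of $I_T$ correspond to bars $[-\infty,-p)_{d-k-1}$ of $(-I^\infty)_V$, which is meaningful because the $-\infty$-valued padded vertices permit bars with left endpoint $-\infty$. The second bijection, between $I_V$ and $(-I^\infty)_T$, then follows by interchanging the roles of the T- and V-constructions throughout.

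The main obstacle is the essential-bar bookkeeping in Stage~3: one must confirm that the compactification introduces exactly one extra essential bar on each side, and that those two extra bars are precisely the ones matched to each other by Theorem~\ref{main_thm}, so that stripping the reduced $[-\infty,\infty)_0$ bar on one side exactly cancels the compactification's contribution on the other. A related subtlety is checking that essential bars $[p,\infty)_k$ of $I_T$, which survive unchanged into $\widehat{I^\infty_T}$, correspond under the duality bijection to bars of $\widehat{(-I^\infty)_V}$ whose deaths occur at finite values $-p$ within $(-I^\infty)_V$ rather than at cells added by the compactification.
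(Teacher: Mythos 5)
Your Stages 1 and 2 follow the paper's route (pad, compactify, apply Theorem~\ref{main_thm} to the dual pair), but the essential-bar bookkeeping in Stage~3 --- which you yourself flag as the main obstacle --- is wrong on the dual side, and taken literally it produces the wrong diagram. First, the premise that ``the compactified and uncompactified filtrations agree on every finite sublevel set'' fails for $(-I^\infty)_V$: the cone cells enter at $-\infty$, so \emph{every} sublevel set of the compactification differs from the corresponding sublevel set of $(-I^\infty)_V$ by the cone. Second, the cone does not ``add a new $[-\infty,\infty)_0$'': the cone point is attached to the padding shell at the same value $-\infty$, so it creates no new connected component; the bar $[-\infty,\infty)_0$ is already a genuine bar of $Dgm((-I^\infty)_V)$ and survives coning unchanged. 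What the cone actually does is cap off the $(d-1)$-cycle carried by the padding shell: it deletes the bar $[-\infty,-p_0)_{d-1}$ of $(-I^\infty)_V$ (where $p_0$ is the birth of the unique essential $0$-bar of $I_T$) and replaces it by the essential bar $[-p_0,\infty)_d$ of the compactified complex. So a non-essential bar changes under compactification, contrary to your claim that only essential bars can differ.

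With the correct accounting the pairing is different from the one you describe: the artificial $[+\infty,\infty)_d$ created by closing $I^\infty_T$ into a sphere is matched by Theorem~\ref{main_thm} to $[-\infty,\infty)_0$, which is a genuine bar of $(-I^\infty)_V$ --- this is precisely why the theorem passes to $\widetilde{Dgm}$ on the dual side, not because the cone created that bar. Meanwhile the essential bar $[p_0,\infty)_0$ of $I_T$ is matched to $[-p_0,\infty)_d$ of the compactified dual, which only becomes the required $[-\infty,-p_0)_{d-0-1}$ after undoing the cone. Your recipe (remove the two ``extra'' bars, keep everything else) would instead leave a spurious essential bar $[-p_0,\infty)_d$ in $\widetilde{Dgm}((-I^\infty)_V)$ and no bar $[-\infty,-p_0)_{d-1}$, contradicting the correspondence you then assert in your final sentence. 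The paper resolves this with an explicit table tracking each class through padding, compactification, dualization, de-compactification and reduction; you need that case analysis, in particular the step converting the essential $0$-bar of the primal into a $(d-1)$-dimensional bar with birth $-\infty$ in the dual.
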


Figure \ref{dual_barcode} ($(-I^\infty)_T$) and Figure \ref{CC_V}d ($I_V$) illustrate the first bijection of the theorem.

\begin{figure}[H]
       
\includegraphics[scale=0.52]{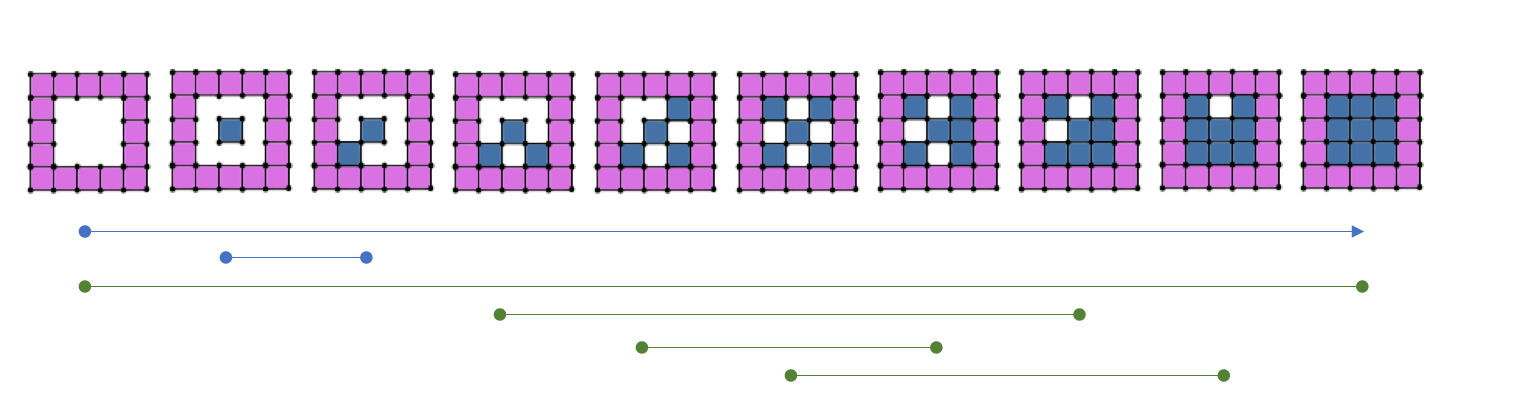}
\footnotesize
\put(-365,90){-$\infty$ }
\put(-327,90){-9}
\put(-290,90){-8}
\put(-255,90){-7}
\put(-220,90){-6}
\put(-187,90){-5}
\put(-150,90){-4}
\put(-115,90){-3}
\put(-80,90){-2}
\put(-42,90){-1}
\put(-35,40){$[-\infty,\infty)_0$}
\put(-280,34){$[-9,-8)_0$}
\put(-35,25){$[-\infty,-1)_1$}
\put(-65,5){$[-5,-2)_1$}
\put(-100,20){$[-7,-3)_1$}
\put(-140,12){$[-6,-4)_1$}

    \caption{We add a layer of pixels to the image of Figure \ref{CC_V}a, consider the negative filtration with the T-construction (we obtain the filtered complex $(-I^\infty)_T$), and compute its barcode; c.f.~Fig.1d.}
    \label{dual_barcode}
\end{figure}

\bibliography{biblio}

\newpage

\section{Appendix}
This appendix provides additional results on discrete Morse theory, a proof sketch for Theorem~\ref{main_thm}, an example illustrating Theorem~\ref{main_thm} and a proof for Theorem~\ref{image_thm}.

For the reader familiar with discrete Morse theory \cite{Forman95adiscrete, NANDA2018}, 
we describe explicitly the dual relations between V-paths in $X$ and V-paths in $X^*$, which allow us to construct a dual filtered discrete gradient field on $X^*$ using a given one on $X$. Our discrete Morse theory results are summarized in the table below.
\begin{center}
\begin{table}[H]
\begin{tabular}{|l|l|l|}
\hline
                                   & \textbf{CW complex $X$}                                                                         & \textbf{Dual CW complex $X^*$}                                                                        \\ \hline
\textbf{Cell}                      & $\sigma^{(k)}$                                                                                  & $\sigma^{*(d-k)}$                                                                                     \\ \hline
\multicolumn{3}{|c|}{\textbf{Filtration}}                                                                                                                                                                                                    \\ \hline
\textbf{Filtration}                & \begin{tabular}[c]{@{}l@{}}$f: X \longrightarrow \R$ \\ $\sigma \mapsto f(\sigma)$\end{tabular} & \begin{tabular}[c]{@{}l@{}}$f^*: X^* \longrightarrow \R$\\ $\sigma^* \mapsto -f(\sigma)$\end{tabular} \\ \hline
\textbf{Filtered vector field}     & $ V = \Set{( \tau_\lambda^{(k)} \vtl \sigma_\lambda^{(k+1)} ) }_\lambda$                        & $V^* = \Set{( \sigma_\lambda^{*(d-k-1)} \vtl \tau_\lambda^{*(d-k)} ) }_\lambda$                       \\ \hline
\textbf{V-path}                    & $(\tau_0 \vtl \sigma_0 \vtr \tau_1 \vtl \ldots \vtr \tau_n \vtl \sigma_n)$                      & $(\sigma_n^* \vtl \tau_n^* \vtr \sigma_{n-1}^* \vtl  \ldots \vtr \sigma_0^* \vtl \tau_0^*)$           \\ \hline
\textbf{Critical cells}            & $\alpha_1, \alpha_2, \ldots \alpha_n$                                                           & $\alpha_n^*, \alpha_{n-1}^*, \ldots \alpha_1^*$                                       \\ \hline
\end{tabular}
\end{table}
\end{center}

\begin{proof}[Proof Sketch of Theorem~\ref{main_thm}]
The proof of Theorem \ref{main_thm} relies on (1) explicitly showing the chain isomorphism and (2) using the results of \cite{Vin} in combination with ours to obtain the bijection.

For (1), we show that $\mathsf{Hom}(\C_\bullet,\Z_2) \cong (\mathbb{D}_n,\mathbb{D}_{n-\bullet})$, where $\C$ is the filtered chain complex of $(X,f)$ and $\mathbb{D}$ the one of $(X^*,f^*)$. Intuitively, the fact that the boundary maps commute with the isomorphisms comes from the following observation: The coboundary map maps every cell $\sigma$ to the collection of cofacets of $\sigma$ that have already appeared in the filtration. The relative boundary map maps every cell $\sigma^*$ to the collection of facets of $\sigma^*$ that have not been mod out yet. These two collections are dual to each other.

For (2), we compose the following bijections: absolute persistent homology $\xleftrightarrow[]{\textrm{Prop. 2.3 in \cite{Vin}}}$ absolute persistent cohomology $\xleftrightarrow[]{\textrm{shifted chain isomorphism (1)}}$ relative persistent homology of the dual $\xleftrightarrow[]{\textrm{Prop. 2.4 in \cite{Vin}}}$ absolute persistent homology of the dual.
\end{proof}

To illustrate our results, we show an example on a pair of dual CW decompositions of a sphere. To simplify the example, we use discrete Morse theory to reduce the filtered chain complexes by keeping only the critical cells. The discrete gradient vector fields are indicated by arrows.

\newpage

\begin{example}\label{example_duality}
We show an example of the duality results. We start with a CW complex $X$ with a function $f$ defined on the vertices and its dual $X^*$ with the function $f^*$ defined on the top-dimensional cells (Figure \ref{example}). We can extend the values of $f$ to all the cells by assigning a cell the maximum value of its vertices. The dual function $f^*$ is defined on the top-dimensional cells of $X^*$ by: $f^*(\sigma^*) = - f(\sigma)$. To extend them to the full complex $X^*$, we assign to a cell the minimum of its cofaces. Note that this corresponds to defining $f^*$ by $f^*(\sigma^*) = - f(\sigma)$ on all the cells directly. 
We then show an example of a filtered vector field $V$ compatible with the filtration $f$ (Figure \ref{filt_complex}) and the corresponding dual vector field $V^*$, compatible with $f^*$ (Figure \ref{filt_dual}). Both of their sets of critical cells are illustrated in Figure \ref{complex_1} and Figure \ref{complex_2}. We then describe the absolute filtered cochain complex of $X$ and the relative filtered chain complex of $X^*$ in parallel to illustrate the results of the isomorphism that leads to Theorem \ref{main_thm}. Finally, we compute the barcodes of the absolute persistent homology of both, showing an example of Theorem \ref{main_thm}.

\begin{figure}[H]
    \centering
        \includegraphics[scale=1]{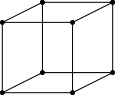}
        \hspace*{2cm}
    \includegraphics[scale=0.35]{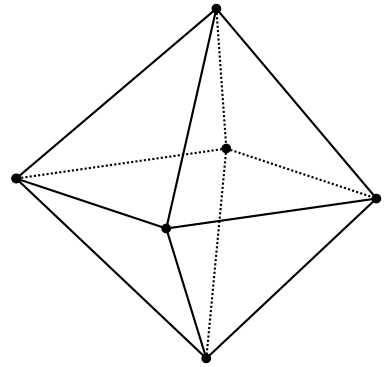}
    \put(-230,-15){$X$}
    \put(-55,-15){$X^*$}
    \put(-235,75){$7$}
    \put(-260,60){$1$}
    \put(-180,75){$3$}
    \put(-210,60){$2$}
    \put(-235,20){$5$}
    \put(-210,5){$8$}
    \put(-263,5){$6$}
    \put(-180,20){$4$}
    \color{red}
    \put(-90,20){$-6$}
    \put(-15,20){$-8$}
    \put(-90,75){$-1$}
    \put(-25,72){$-2$}
    \color{blue}
    \put(-40,30){$-4$}
    \put(-75,30){$-5$}
    \put(-72,65){$-7$}
    \put(-40,65){$-3$}
    \color{black}
    \caption{On the left, a CW decomposition of a sphere $X$ with a function $f$ defined on the vertices. We do not represent the $2$-cells for visibility reasons (they are the faces of the cube). On the right, the dual complex $X^*$. We only represent the values of $f^*$ on the top-dimensional cells (faces), to avoid confusion. In red, we show the values of the $2$-cells that are in the front, and in blue the $2$-cells in the back.}
    \label{example}
\end{figure}

Figure \ref{filt_complex} shows the filtration on $X$ of Figure \ref{example}, along with a filtered vector field $V$. The corresponding dual filtration on $X^*$, along with the dual filtered vector field $V^*$, is shown on Figure \ref{filt_dual}.

\begin{figure}[H]
\centering
    \includegraphics[scale=0.4]{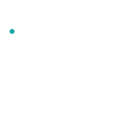}
    \includegraphics[scale=0.4]{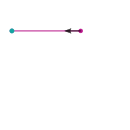}
    \includegraphics[scale=0.4]{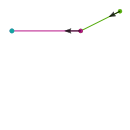}
    \includegraphics[scale=0.4]{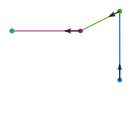}
    \includegraphics[scale=0.4]{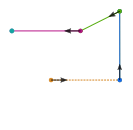}
    \includegraphics[scale=0.4]{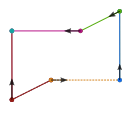}
    \includegraphics[scale=0.4]{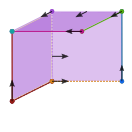}
    \includegraphics[scale=0.4]{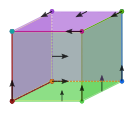}
    \small
       \put(-340,45){1}
      \put(-290,45){2}
       \put(-240,45){3}
         \put(-200,45){4}
           \put(-160,45){5}
             \put(-110,45){6}
               \put(-70,45){7}
                 \put(-20,45){8}
    \caption{A filtration of CW decomposition of a sphere $X$ and a filtered vector field $V$.}
    \label{filt_complex}
\end{figure}
\vspace*{-0.5cm}
\begin{figure}[H]
\centering
    \includegraphics[scale=0.12]{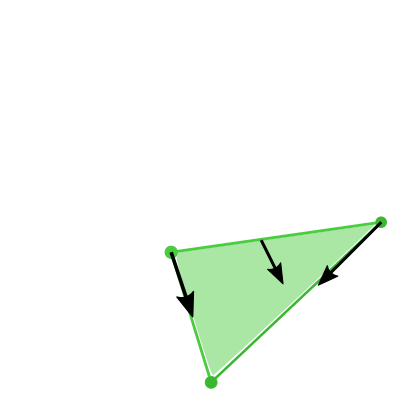}
    \hspace*{0.2cm}
     \includegraphics[scale=0.12]{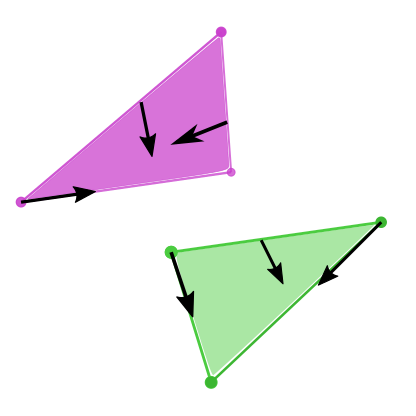}
      \hspace*{0.2cm}
      \includegraphics[scale=0.12]{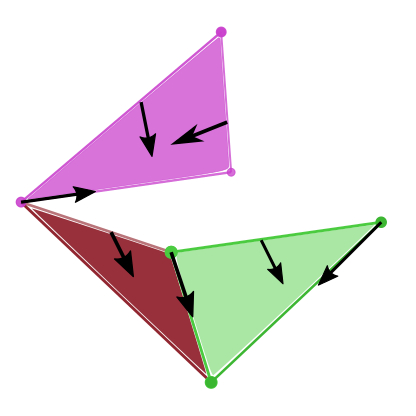}
       \includegraphics[scale=0.12]{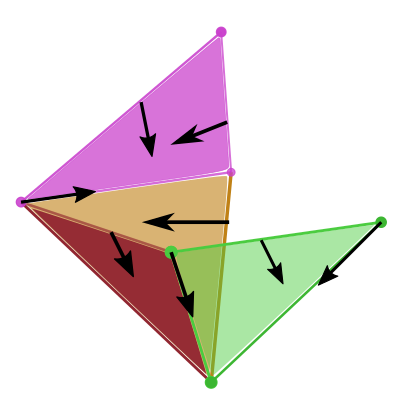}
        \includegraphics[scale=0.12]{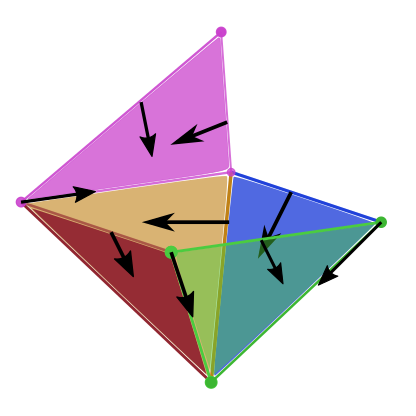}
         \includegraphics[scale=0.12]{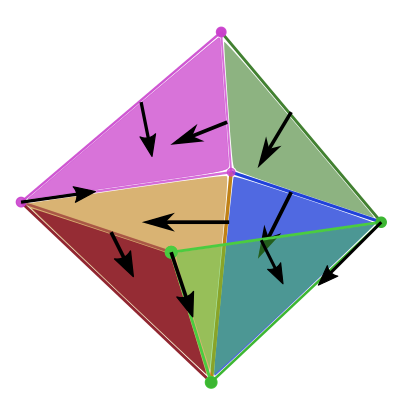}
       \includegraphics[scale=0.12]{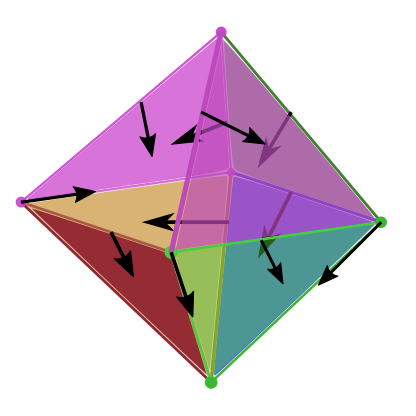}
      \includegraphics[scale=0.12]{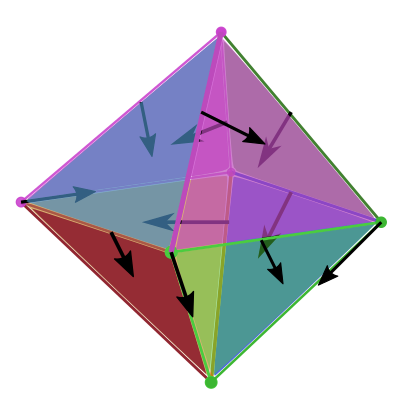}
    \put(-320,45){-8}
      \put(-280,45){-7}
       \put(-230,45){-6}
         \put(-190,45){-5}
           \put(-150,45){-4}
             \put(-110,45){-3}
               \put(-65,45){-2}
                 \put(-25,45){-1}
    \caption{The dual filtration $f^*$ of the dual complex $X^*$ and the dual filtered vector field $V^*$.}
    \label{filt_dual}
\end{figure}

\newpage

The critical cells of $V$ and $V^*$ are in bijection and they appear in reversed order. Figure \ref{complex_1} and \ref{complex_2} show the critical cells of $X$ and $X^*$, that is, the cells that are not paired by the vector fields.
\begin{minipage}[t]{6cm}
\begin{figure}[H]
    \centering
    \includegraphics[scale=0.8]{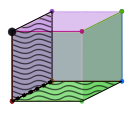}
    \put(-85,65){$a^{(0)}$}
    \put(-80,5){$b^{(1)}$}
    \put(-90,35){$c^{(2)}$}
    \put(-50,5){$d^{(2)}$}
    \caption{A filtered vector field on $X$ respecting the filtration of Figure \ref{filt_complex} and the corresponding critical cells (the highlighted top left vertex ($a^{(0)}$) and bottom left edge ($b^{(1)}$) and two faces (left hand side: $c^{(2)}$ and bottom: $d^{(2)}$).}
    \label{complex_1}
\end{figure}

    The critical cells of $V$ are (in order of appearance):
    $$ \mathsf{Crit}(V)= \Set{a^{(0)},b^{(1)},c^{(2)},d^{(2)}}.$$ 
\end{minipage}
\begin{minipage}[t]{0.7cm}

\text{}\text{}\text{}\text{}\text{}\text{}\text{}\text{}\text{}\text{}\text{}\text{}\text{}\text{}\text{}\text{}\text{}\text{}\text{}\text{}\text{}\text{}\text{}\text{}\text{}\text{}\text{}\text{}\text{}\text{}\text{}\text{}\text{}\text{}\text{}\text{}\text{}\text{}\text{}\text{}\text{}\text{}\text{}\text{}\text{}\text{}\text{}\text{}\text{}\text{}\text{}\text{}

\end{minipage}
\begin{minipage}[t]{6cm}
\begin{figure}[H]
    \centering
    \includegraphics[scale=0.25]{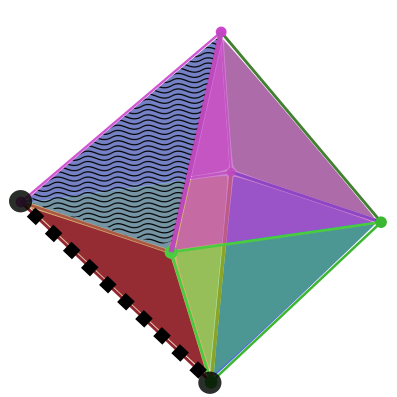}
    \put(-75,55){$a^{*(2)}$}
    \put(-75,12){$b^{*(1)}$}
    \put(-97,35){$c^{*(0)}$}
    \put(-40,-10){$d^{*(0)}$}
    \caption{The dual complex of Figure \ref{complex_1}, with the dual critical cells  induced by the dual vector field $V^*$ ($a^{*(2)}$ the top $2$-cell, $b^{*(1)}$ the bottom edge, $c^{*(0)}$ the left vertex and $d^{*(0)}$ the bottom vertex). \medskip 
    \text{ } }
    \label{complex_2}
\end{figure}

The critical cells of $V^*$ are (in order of appearance):
$$\text{Crit}(V^*)= \Set{d^{*(0)},c^{*(0)},b^{*(1)},a^{*(2)}}.$$
\vspace{0.5cm}
\end{minipage}
\newpage
We now build the Morse filtered chain complexes of $X$ and $X^*$. To illustrate the filtered chain isomorphism between the absolute cochains of $(X,f)$ and the relative chains of $(X^*,f^*)$, we build the respective filtered complexes.

\begin{minipage}[t]{6cm}
\vspace*{0.3cm}
    The cochains of the corresponding Morse filtered chain complex of $X$
       \begin{tikzcd}
    0  & 0  \arrow[l]  & 0 \arrow[l] \\
     0 \arrow[u,two heads] & 0 \arrow[u, two heads] \arrow[l] & \Z_2[\hat{a}] \arrow[l] \arrow[u,two heads] \\
     0 \arrow[u, two heads] & \Z_2[\hat{b}] \arrow[u, two heads] \arrow[l] & \Z_2[\hat{a}] \arrow[l,"{\begin{pmatrix} 2 \end{pmatrix}}"] \arrow[u, two heads]\\
     \Z_2[\hat{c}] \arrow[u, two heads] & \Z_2[\hat{b}] \arrow[u, two heads] \arrow[l,"{\begin{pmatrix} 1 \end{pmatrix}}"] & \Z_2[\hat{a}] \arrow[l,"{\begin{pmatrix} 2 \end{pmatrix}}"] \arrow[u, two heads]\\
     \Z_2[\hat{c},\hat{d}] \arrow[u, two heads] & \Z_2[\hat{b}] \arrow[u, two heads] \arrow[l,"{\begin{pmatrix} 1 \\ 1 \end{pmatrix}}"] & \Z_2[\hat{a}]\arrow[u, two heads] \arrow[l, "{\begin{pmatrix} 2 \end{pmatrix}}"] \\
    \end{tikzcd}
 \end{minipage}
 \begin{minipage}[t]{0.5cm}
 
\text{}\text{}\text{}\text{}\text{}\text{}\text{}\text{}\text{}\text{}\text{}\text{}\text{}\text{}\text{}\text{}\text{}\text{}\text{}\text{}\text{}\text{}\text{}\text{}\text{}\text{}
\text{}\text{}\text{}\text{}\text{}\text{}\text{}\text{}\text{}\text{}\text{}\text{}\text{}\text{}\text{}\text{}\text{}\text{}\text{}\text{}\text{}\text{}\text{}\text{}\text{}\text{}

\end{minipage}
\begin{minipage}[t]{6cm}
\vspace*{0.3cm}
The relative chains of the corresponding Morse filtered chain complex of $X^*$
\begin{tikzcd}
  \Z_2[a^*] \arrow[r,"{\begin{pmatrix} 2 \end{pmatrix}}"] \arrow[d,two heads] & \Z_2[b^*] \arrow[d,two heads] \arrow[r,"{\begin{pmatrix} 1 \\ 1 \end{pmatrix}}"] & \Z_2[c^*,d^*] \arrow[d,two heads]\\
    \Z_2[a^*] \arrow[r,"{\begin{pmatrix} 2 \end{pmatrix}}"] \arrow[d,two heads] & \Z_2[b^*] \arrow[d,two heads] \arrow[r,"{\begin{pmatrix} 1 \end{pmatrix}}"] & \Z_2[c^*]  \arrow[d,two heads]\\
    \Z_2[a^*] \arrow[r,"{\begin{pmatrix} 2 \end{pmatrix}}"]  \arrow[d,two heads] & \Z_2[b^*]\arrow[r] \arrow[d,two heads] & 0  \arrow[d,two heads]\\
     \Z_2[a^*] \arrow[r]  \arrow[d,two heads] & 0 \arrow[d,two heads] \arrow[r] &  \arrow[d,two heads] \\
  0  \arrow[r]  & 0  \arrow[r]  & 0 \\
    \end{tikzcd}
    \end{minipage}

    \vspace*{0.5cm}
    
    \begin{minipage}[t]{6cm}
    Applying the homology functor to the previous chain complex, one gets the absolute persistent cohomology module of $X$:
    
     \vspace*{0.5cm}
     
         \begin{tikzcd}
    0  & 0   & 0   \\
     0 \arrow[u]  & 0   \arrow[u] & \Z_2[\hat{a}] \arrow[u]   \\
    0 \arrow[u]  & \Z_2[\hat{b}]    \arrow[u] & \Z_2[\hat{a}] \arrow[u]  \\
     0 \arrow[u]  & 0  \arrow[u] & \Z_2[\hat{a}] \arrow[u]  \\
     \Z_2[\hat{c} + \hat{d}]  \arrow[u]  & 0   \arrow[u] & \Z_2[\hat{a}]  \arrow[u] \\
    \end{tikzcd}
      
\end{minipage}
   \begin{minipage}[t]{0.7cm}

\text{}\text{}\text{}\text{}\text{}\text{}\text{}\text{}\text{}\text{}\text{}\text{}\text{}\text{}\text{}\text{}\text{}\text{}\text{}\text{}\text{}\text{}\text{}\text{}\text{}\text{}\text{}\text{}\text{}\text{}\text{}\text{}\text{}\text{}\text{}\text{}\text{}\text{}\text{}\text{}\text{}\text{}\text{}\text{}\text{}\text{}\text{}\text{}\text{}\text{}\text{}\text{}

\end{minipage}
 \begin{minipage}[t]{6cm}
       Applying the homology functor to the previous chain complex, one gets the relative persistent homology module of $X^*$:
       \vspace*{0.5cm}
       
    \begin{tikzcd}
    \Z_2[a^*] \arrow[d]   & 0   \arrow[d] & \Z_2[c^*+d^*] \arrow[d]  \\
     \Z_2[a^*] \arrow[d]   & 0   \arrow[d] & 0 \arrow[d]   \\
     \Z_2[a^*] \arrow[d]    & \Z_2[b^*]\arrow[d]   & 0 \arrow[d]  \\
   \Z_2[a^*] \arrow[d]    & 0   \arrow[d] &  0 \arrow[d]  \\
     0    & 0   & 0 \\
    \end{tikzcd}
\end{minipage}

\vspace*{0.5cm}

\begin{minipage}[t]{6.3cm}
    The corresponding persistence pairs are: $$(a^{(0)}, \infty), (b^{(1)},c^{(2)}), (d^{(2)},\infty).$$ And the absolute persistent cohomology barcode with the $f$-values: 
      $$[1,\infty)_0, [6,7)_1, [8,\infty)_2. $$ 
\end{minipage}  
\begin{minipage}[t]{0.5cm}

\text{}\text{}\text{}\text{}\text{}\text{}\text{}\text{}\text{}\text{}\text{}\text{}\text{}\text{}\text{}\text{}\text{}\text{}\text{}\text{}\text{}\text{}\text{}\text{}\text{}\text{}

\end{minipage}
\begin{minipage}[t]{6.3cm}
        The corresponding persistence pairs are: 
      $$(-\infty, d^{*(0)}), (c^{*(0)},b^{*(1)}), (-\infty, a^{*(2)}).$$
        And the relative persistent homology barcode with the $f^*$-values: 
      $$[-\infty, -8)_0, [-7,-6)_1, [-\infty, -1)_2. $$
    \end{minipage}

Applying the bijections of \cite{Vin}, we obtain the absolute persistent homology barcodes of $(X,f)$ and $(X^*,f^*)$: 
\begin{align*}
    Dgm(X,f) &= \{ [1,\infty)_0, [6,7)_1, [8,\infty)_2\} \\
    Dgm(X^*,f^*) &= \{ [-8, \infty)_0, [-7,-6)_0, [-1, \infty)_2 \},
\end{align*}
illustrating the results of Theorem \ref{main_thm}.
\end{example}

\begin{proof}[Proof of Theorem~\ref{image_thm}]
To prove the first of the two statements in Theorem \ref{image_thm} for a $d$-dimensional image $I$, we apply Theorem \ref{main_thm} to the following two dual filtered complexes: the one-point compactifications of $I^{\infty}_V$ and of $(-I^{\infty})_T$, denoted by $C(I^{\infty}_V)$ and $C((-I^{\infty})_T)$. We then need to modify the persistence diagrams to reverse the effect of one-point compactifying both filtered complexes.

In order to avoid dealing with infinite values, instead of adding a layer of pixels with value $\infty$, we add a layer of pixels with value $N$ (with $N$ larger than the maximal pixel value). We denote the padded image by $I^{N}$. The dual complexes are then: $C(I^{N}_V)$ and $C((-I^{N})_T)$. 


Compactifying $I^{N}_V$ adds a persistence pair $[N,\infty)_d$ in dimension $d$ to the barcode. Compactifying $(-I^{N})_T$ replaces the $(d-1)$-dimensional persistence pair $[-N,-p_0)_{d-1}$ by the $d$-dimensional persistence pair $[-p_0, \infty)_d$. 

We observe that $Dgm_k(I_V)$ has one $0$-dimensional persistence pair $[p_0,\infty)_0$ and the rest of the persistence pairs have the form $[p,q)_k$ with $p$ and $q$ finite.
Altogether, we get the following recipe of how to move between the persistence pairs of the different spaces (denoting the dimension of a persistence pair by an index)
\begin{center}
\begin{table}[H]
\begin{tabular}{|l|l|l|l|l|}
\hline
$Dgm(I_V)=Dgm(I^{N}_V)$ & $Dgm(C(I^{N}_V))$ & $Dgm(C((-I^{N})_T))$ & $Dgm((-I^{N})_T)$ & $\widetilde{Dgm}((-I^{N})_T)$\\
\hline
$[p,q)_{k} $     & $[p,q)_{k} $     & $[-q,-p)_{d-k-1}$ & $[-q,-p)_{d-k-1}$ & $[-q,-p)_{d-k-1}$\\
$[p_0,\infty)_{0}$ & $[p_0,\infty)_{0}$ & $[-p_0,\infty)_{d}$ & $[-N,-p_0)_{d-1}$   & $[-N,-p_0)_{d-0-1}$  \\
                 & $[N,\infty)_{d}$ & $[-N,\infty)_{0}$ & $[-N,\infty)_{0}$ & \\
\hline
\end{tabular}
\end{table}
\end{center}
When we denote $N$ by $\infty$, as in Theorem~\ref{image_thm}, we see that both cases follow the same rule: namely the one stated in Theorem~\ref{image_thm}. 
The second statement of Theorem~\ref{image_thm} can be proved analogously.
\end{proof}
\end{document}